\newcommand{\pate}[3]{\left\langle#1| #2|#3\right\rangle}
\newcommand{\sA}[1]{\mathscr{A}_{#1}}
\newcommand{\sC}[1]{\mathscr{C}_{#1}}
\newcommand{\sF}[1]{\mathscr{F}_{#1}}
\newcommand{\NN}{\mathbf{N}}
\newcommand{\ZZ}{\mathbf{Z}}
\newcommand{\RR}{\mathbf{R}}
\newcommand{\QQ}{\mathbf{Q}}
\newcommand{\del}{\partial}
\newcommand{\fhi}{\varphi}
\newcommand{\ellal}{\ell^\infty_\mathscr{A}}
\newcommand{\ellF}{\ell^\infty_\mathscr{F}}
\newcommand{\ellalt}{\ell^\infty_\mathrm{alt}}
\newcommand{\Aut}{\mathrm{Aut}}
\newcommand{\hbc}{\mathrm{H}_\mathrm{cb}}
\newcommand{\hb}{\mathrm{H}_\mathrm{b}}
\newcommand{\PSL}{\mathbf{PSL}}
\newcommand{\SL}{\mathbf{SL}}
\newcommand{\PGL}{\mathbf{PGL}}
\newcommand{\GL}{\mathbf{GL}}
\theoremstyle{plain}
\newtheorem{thm}{Theorem}
\newtheorem{prop}[thm]{Proposition}
\newtheorem{cor}[thm]{Corollary}
\begin{document}

\title[The bounded cohomology of $\SL_2$]{The bounded cohomology of $\SL_2$\\ over local fields and $S$-integers.}

\begin{abstract}
It is proved that the continuous bounded cohomology of $\SL_2(k)$ vanishes in all positive degrees whenever $k$ is a non-Archimedean local field. This holds more generally for boundary-transitive groups of tree automorphisms and implies low degree vanishing for $\SL_2$ over $S$-integers.
\end{abstract}

\author[M. Bucher and N. Monod]{Michelle Bucher and Nicolas Monod}
\maketitle

\section{Introduction}
Bounded cohomology, despite its many applications, has only been computed in few instances and usually in low degrees. When it does not vanish, it is often enormous, notably for discrete groups with hyperbolic properties. It has been suggested (e.g.~\cite[Pr.~A]{MonodICM}) that the situation could be less exotic for Lie groups and algebraic groups; the present work submits evidence towards this hope in the case of $\SL_2$. Our methods have an arboreal component and therefore the first result reads as follows.

\begin{thm}\label{thm:main}
Let $G$ be a locally compact group acting properly on a locally finite tree $T$. If the $G$-action on the boundary at infinity $\partial T$ is transitive, then the continuous bounded cohomology $\hbc^n(G)$ vanishes for all $n>0$.
\end{thm}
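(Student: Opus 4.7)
My approach uses the Burger--Monod framework of amenable boundaries together with the one-dimensional nature of the tree.

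First, identify $\partial T$ as an amenable $G$-space. Fix $\xi \in \partial T$ and set $P := G_\xi$. The Busemann cocycle at $\xi$ is $P$-equivariant and yields a homomorphism $P \to \mathbf{Z}$ whose kernel fixes a geodesic ray to $\xi$ pointwise; properness of the $G$-action together with local finiteness of $T$ makes this kernel compact. Hence $P$ is compact-by-cyclic, therefore amenable. The transitivity hypothesis then gives $\partial T \cong G/P$, an amenable $G$-space, and the Burger--Monod theorem identifies $\hbc^n(G)$ with the cohomology of the complex $\ellalt((\partial T)^{\bullet+1})^G$ of $G$-invariant bounded alternating measurable cochains.

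Second, transfer the computation to the tree. Since $T$ is one-dimensional, I expect this complex to be computed up to cohomology by a two-term complex of bounded cochains on vertices and oriented edges of $T$, of the form
\[
0 \to \ell^\infty(V(T))^G \to \ellalt(\vec E(T))^G \to 0,
\]
whose terms are relatively injective because vertex and edge stabilisers are compact by the properness hypothesis. The comparison would go in two directions: boundary cochains push forward to tree cochains by averaging against a $G$-quasi-invariant probability measure on $\partial T$, while tree cochains pull back to boundary cochains along the natural convex-hull map sending a tuple of pairwise distinct boundary points to its convex hull in $T$. Boundary transitivity is used both to ensure that the averaging respects $G$-orbits and to control the norms. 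With this comparison in place, the theorem follows: the tree complex has length one, so it produces no cohomology in degrees $\geq 2$, giving $\hbc^n(G)=0$ for $n\geq 2$; the degree $n=1$ case is automatic, since bounded cohomology with trivial real coefficients vanishes identically in degree $1$ for any locally compact group.

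\emph{Main obstacle.} The principal difficulty is that the naive tree cochain complex $\ell^\infty(V(T))\to\ellalt(\vec E(T))$ is \emph{not} a strong resolution in the Burger--Monod Banach sense: path-summation does not produce a bounded contracting homotopy, so one cannot read off bounded cohomology directly from it. The resolution of this difficulty is to execute the comparison described above with norm control, and I expect the technical heart of the paper to consist of intermediate combinatorial complexes---the ``apartment'' and ``flag'' complexes $\ellal$ and $\ellF$ in the paper's notation---bridging the boundary and the tree, and showing that the resulting tower of resolutions collapses onto the one-dimensional tree data. Boundary transitivity should enter here essentially, ensuring that these intermediate complexes carry sufficiently transitive $G$-actions for the collapse to happen.
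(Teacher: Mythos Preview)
Your first step is essentially sound: $P = G_\xi$ is amenable (though the kernel of the Busemann homomorphism is not compact in general---it is an increasing union of ray-stabilisers, hence of compact subgroups, which still suffices for amenability), $\partial T \cong G/P$, and the Burger--Monod boundary resolution realises $\hbc^*(G)$ on the complex $\ellalt((\partial T)^{\bullet+1})^G$. But from there your plan stalls at exactly the obstacle you name, and your speculation about how the paper gets past it is off the mark.

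The hope that the boundary complex collapses to a two-term vertex/edge complex has no mechanism behind it. Your proposed comparison maps (averaging toward the tree, convex-hull pullback toward the boundary) are not chain maps between these objects, and there is no reason $\ellalt((\partial T)^{\bullet+1})^G$ should be quasi-isomorphic to a length-one complex. One-dimensionality of the model alone cannot be the explanation: for $\SL_2(\RR)$ acting on its circle boundary the analogous boundary complex is \emph{not} acyclic in degree~$2$, so something specific to the non-Archimedean setting must intervene.

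The paper never touches the boundary resolution. It works entirely on the vertex set of $T$, via the complex $\ellal(T^{\bullet+1})$ of bounded alternating functions supported on \emph{aligned} tuples---tuples lying on a single geodesic segment. The technical heart (Theorem~\ref{thm:fhi}) is an explicit bounded $\Aut(T)$-equivariant chain projection $\fhi_*$ from the full chains onto aligned chains, built by summing nearest-point projections onto the segments $[x_i,x_j]$; this shows $\ellal(T^{\bullet+1})$ is a strong, relatively injective resolution. The vanishing then comes not from a boundary point stabiliser but from a \emph{line} stabiliser: by strong transitivity (a consequence of boundary transitivity), restriction to a fixed geodesic line $L\subset T$ identifies $\ellal(T^{n+1})^G$ with $\ellalt(L^{n+1})^H$, where $H$ is the setwise stabiliser of $L$. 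Since $H$ is amenable, that complex computes $\hbc^*(H)=0$ in positive degrees. So $\ellal$ is not a bridge between the boundary and the tree; it is a device for reducing the tree to a single line. (The symbol $\ellF$ refers to the analogous \emph{flatmate} complex for higher-rank buildings, where the corresponding acyclicity is only conjectured.)
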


Here $\hbc^n(G) = \hbc^n(G, \RR)$ refers to cohomology with the trivial coefficients $\RR$. We emphasize that the statement fails for non-trivial representations even if they are irreducible unitary representations, as can be seen for $n=2$ by cohomological induction from free lattices. The statement of the theorem was previously known only for $n=2$ (where it follows from~\cite[7.1]{Burger-Monod1}; see~\cite{Iozzi-Pagliantini-Sisto_arx} for a more precise characterisation).

\medskip
Theorem~\ref{thm:main} applies notably to Bruhat--Tits trees associated to $\SL_2$, yielding the first complete computation of the bounded cohomology of a simple algebraic group:

\begin{cor}\label{cor:SL}
Let $k$ be a non-Archimedean local field. Then the continuous bounded cohomology of $\SL_2(k)$ vanishes in all positive degrees.

The same holds for $\GL_2(k)$, $\PSL_2(k)$ and $\PGL_2(k)$.\qed
\end{cor}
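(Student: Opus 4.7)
The plan is to apply Theorem~\ref{thm:main} to each of the four groups acting on the Bruhat--Tits tree $T$ of $\SL_2$, and to handle the only case where the action fails to be proper, namely $\GL_2(k)$, by a separate quotient argument.

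First I would recall that $T$ is the $(q+1)$-regular tree, where $q$ denotes the cardinality of the residue field of $k$; since $k$ is a non-Archimedean local field, $q$ is finite and hence $T$ is locally finite. Both $\SL_2(k)$ and $\PGL_2(k)$ act on $T$ by automorphisms with open compact vertex stabilizers (commensurable with $\SL_2(\mathcal{O}_k)$ and $\PGL_2(\mathcal{O}_k)$ respectively), so both actions are proper. The induced action of $\PSL_2(k)=\SL_2(k)/\{\pm I\}$ is then also proper, since the kernel $\{\pm I\}$ is compact. Moreover, the visual boundary $\partial T$ is canonically identified with $\PP^1(k)$, and the actions of all three of these groups on $\PP^1(k)$ by fractional linear transformations are transitive (indeed doubly transitive, as witnessed by the Bruhat decomposition). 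Theorem~\ref{thm:main} thus yields $\hbc^n(G)=0$ for all $n>0$ whenever $G$ is one of $\SL_2(k)$, $\PSL_2(k)$, or $\PGL_2(k)$.

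For $\GL_2(k)$ the natural action on $T$ factors through $\PGL_2(k)$, so the centre $k^\times\cdot I$ acts trivially; since $k^\times$ is not compact, this action is \emph{not} proper and Theorem~\ref{thm:main} cannot be applied directly. To reduce to the previous case I would invoke the standard fact that if $N$ is a closed amenable normal subgroup of a locally compact second countable group $H$, then inflation gives an isomorphism $\hbc^n(H/N)\xrightarrow{\;\sim\;}\hbc^n(H)$ for every $n$. Applied to the abelian (hence amenable) normal subgroup $k^\times\trianglelefteq\GL_2(k)$, this yields
\[
\hbc^n(\GL_2(k)) \;\cong\; \hbc^n(\PGL_2(k)) \;=\; 0 \qquad (n>0).
\]

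I do not anticipate a genuine difficulty here: once Theorem~\ref{thm:main} is in hand the corollary is essentially a bookkeeping matter. The only point requiring a moment's care is $\GL_2(k)$, where one must pass through $\PGL_2(k)$ via the amenability of the centre rather than trying to work with the non-proper action on $T$ directly; the remaining verifications (local finiteness of $T$, compactness of vertex stabilizers, transitivity on $\PP^1(k)$) are classical Bruhat--Tits input.
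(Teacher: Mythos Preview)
Your proposal is correct and follows the same route as the paper, which simply records the corollary with a \qed after noting that Theorem~\ref{thm:main} applies to the Bruhat--Tits tree of $\SL_2$. You are in fact more careful than the paper on one point: you observe that the $\GL_2(k)$-action on $T$ is not proper (the non-compact centre acts trivially) and therefore pass to $\PGL_2(k)$ via the inflation isomorphism for amenable normal subgroups, whereas the paper leaves this step implicit.
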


This corollary stands in contrast to the Archimedean case: currently, $\hbc^n(\SL_2(\RR))$ is only known up to $n=4$. Specifically, it is one-dimensional for $n=2$ (combining~\cite{Guichardet-Wigner} with~\cite[6.1]{Burger-Monod1}), it vanishes for $n=3$ by~\cite[1.5]{Burger-MonodERN} and vanishes for $n=4$ by~\cite{Hartnick-Ott15}. (We recall that for any group, $\hbc^1$ vanishes for trivial reasons.)

\medskip

Using a result from~\cite{MonodVT}, we can deduce vanishing results for irreducible lattices in products of trees. In the following statement, the lattice $\Gamma$ is called \textbf{irreducible} if its projection to any proper sub-product of $G_i$s is dense.

\begin{cor}\label{cor:lattices}
Let $\Gamma<G_1\times \cdots \times G_\ell$ be an irreducible lattice, where each $G_i$ is a locally compact group acting properly on a locally finite tree $T_i$, transitively on $\partial T_i$.

Then the bounded cohomology $\hb^n(\Gamma)$ vanishes for all $0<n< 2\ell$.
\end{cor}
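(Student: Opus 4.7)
The plan is to combine Theorem~\ref{thm:main}, applied to each factor separately, with the vanishing criterion for irreducible lattices in products supplied by~\cite{MonodVT}.

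First, observe that each factor $G_i$ satisfies the hypotheses of Theorem~\ref{thm:main}: it is locally compact, it acts properly on the locally finite tree $T_i$, and it acts transitively on $\partial T_i$. Theorem~\ref{thm:main} therefore yields $\hbc^n(G_i) = 0$ for every $n > 0$ and every $i = 1, \ldots, \ell$. Each factor thus has vanishing continuous bounded cohomology in all positive degrees.

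Next, I would invoke the main vanishing result of~\cite{MonodVT}: for an irreducible lattice $\Gamma$ in a product $G_1 \times \cdots \times G_\ell$ of locally compact second countable groups, if each factor satisfies $\hbc^n(G_i) = 0$ for every $n > 0$, then $\hb^n(\Gamma) = 0$ in the range $0 < n < 2\ell$. Applied to our situation, with the input from the previous paragraph, this is precisely the desired conclusion. Note that for $\ell = 1$ the range collapses to $n = 1$, so the statement is trivial in that case; the substantive content begins at $\ell \ge 2$, where the threshold $2\ell$ reflects that each factor contributes two degrees of vanishing through the Hochschild--Serre/Künneth-type machinery underlying \cite{MonodVT}.

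The only real task left would be to check carefully that the hypotheses of the theorem in~\cite{MonodVT} match our setting: second countability of the $G_i$'s, the precise definition of irreducibility (matching the dense-projection convention fixed just before the statement of the corollary), and any measurability or regularity conditions on the action. For locally compact groups acting properly on locally finite trees these are automatic, so I expect this to be routine bookkeeping rather than a genuine obstacle. The substantive work is already packaged into Theorem~\ref{thm:main} and into the general framework of~\cite{MonodVT}; the corollary itself is a clean application of both.
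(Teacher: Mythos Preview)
Your overall strategy matches the paper's, but you have collapsed two distinct steps into one and misattributed the combined statement to~\cite{MonodVT}. What Corollary~1.12 of~\cite{MonodVT} actually supplies is an isomorphism $\hbc^n(G)\cong\hb^n(\Gamma)$ for all $n<2\ell$, where $G=G_1\times\cdots\times G_\ell$; it does \emph{not} package the passage from factor-wise vanishing of $\hbc^*(G_i)$ to vanishing of $\hb^*(\Gamma)$. The step you have hidden inside your black box is proving that $\hbc^n(G)=0$ for $n>0$ given that each $\hbc^n(G_i)=0$; this is not automatic in bounded cohomology.

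The paper handles this missing step by the Hochschild--Serre spectral sequence for the product, applied inductively on~$\ell$. The subtlety you glossed over is that the $E_2$-page only admits the expected description $E_2^{p,q}\cong\hbc^p(G_1,\hbc^q(G_2))$ when $\hbc^q(G_2)$ is \emph{Hausdorff}. Here that is satisfied because Theorem~\ref{thm:main} gives outright vanishing in positive degrees (and degree zero is one-dimensional), but this is a genuine hypothesis to check, not something absorbed into~\cite{MonodVT}. Your parenthetical remark about ``Hochschild--Serre/K\"unneth-type machinery underlying~\cite{MonodVT}'' suggests you sensed this, but the machinery lives outside that reference and must be invoked explicitly.
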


This was previously known only for $n=2$, by Corollary~26 of~\cite{Burger-Monod3}.

\medskip

Because $S$-arithmetic groups can be realized as irreducible lattices in mixed Archimedean and non-Archimedean groups, one can also establish a result for $\SL_2$ over $S$-integers.

\begin{cor}\label{cor:sadic}
Let $S$ be a finite set of prime numbers.

Then $\hb^n(\SL_2(\ZZ[S^{-1}]))$ is isomorphic to $\hbc^n(\SL_2(\RR))$ for all $n < 2|S| +2$.
\end{cor}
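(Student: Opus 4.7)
The plan is to realize $\Gamma := \SL_2(\ZZ[S^{-1}])$ as an irreducible lattice, via the standard diagonal embedding, in the locally compact product
\[
G := \SL_2(\RR) \times \prod_{p \in S} \SL_2(\QQ_p),
\]
which has $\ell = |S|+1$ factors. Strong approximation ensures that the projection of $\Gamma$ to any proper sub-product is dense, so $\Gamma$ is irreducible in the sense used above, and finiteness of covolume is classical.

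The first step is to compare $\hb^n(\Gamma)$ with $\hbc^n(G)$. I would invoke the tool from \cite{MonodVT} that underlies Corollary~\ref{cor:lattices}: rather than merely producing vanishing, it should yield a comparison isomorphism $\hbc^n(G) \xrightarrow{\sim} \hb^n(\Gamma)$ for $n < 2\ell = 2|S|+2$, using that each non-Archimedean factor acts properly and boundary-transitively on its Bruhat--Tits tree and hence benefits from Theorem~\ref{thm:main}. In the original application of Corollary~\ref{cor:lattices} all factors have vanishing $\hbc^{>0}$, whence vanishing on the lattice; here instead we keep the Archimedean factor's contribution and pass through the comparison isomorphism.

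The second step is then to compute $\hbc^n(G)$. Since each non-Archimedean factor $\SL_2(\QQ_p)$ has vanishing continuous bounded cohomology in positive degrees by Corollary~\ref{cor:SL}, one may peel them off one by one: a standard Hochschild--Serre-type argument in continuous bounded cohomology gives $\hbc^n(G' \times H) \cong \hbc^n(G')$ whenever $\hbc^{>0}(H)=0$. Iterating $|S|$ times produces $\hbc^n(G) \cong \hbc^n(\SL_2(\RR))$, which combined with the first step yields the corollary. The main obstacle is the first step: the stated conclusion of Corollary~\ref{cor:lattices} is only a vanishing, so one must unfold the argument of \cite{MonodVT} to extract the comparison isomorphism in the same range $n<2\ell$, and verify that keeping one factor with nontrivial continuous bounded cohomology does not shrink that range.
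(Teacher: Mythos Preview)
Your proposal is correct and follows essentially the same route as the paper: realize $\SL_2(\ZZ[S^{-1}])$ as an irreducible lattice in $\SL_2(\RR)\times\prod_{p\in S}\SL_2(\QQ_p)$, invoke Corollary~1.12 of \cite{MonodVT} to get $\hbc^n(G)\cong\hb^n(\Gamma)$ for $n<2(|S|+1)$, and then collapse the non-Archimedean factors via Hochschild--Serre using Corollary~\ref{cor:SL}. Your final worry is unnecessary: the comparison isomorphism from \cite{MonodVT} is already quoted verbatim in the proof of Corollary~\ref{cor:lattices} as holding for all $n<2\ell$ with no hypothesis on the individual factors' bounded cohomology, so nothing needs to be unfolded or re-verified when one factor is Archimedean.
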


This was previously known only for $n=2$ (see~\cite[Cor.~24]{Burger-Monod3}). Our bound on $n$ is sharp in the special case $S=\varnothing$, since the second bounded cohomology of $\SL_2(\ZZ)$ is infinite-dimensional.

\medskip

Our proofs rely on a new complex of \emph{aligned} chains which we show to be acyclic in the context of bounded operators. Continuing into higher rank, we introduce the \emph{flatmate complex}, but currently we cannot prove the corresponding acyclicity result. The rest of our proofs however still works and this raises the prospect that the continuous bounded cohomology (with trivial coefficients) of \emph{any} $p$-adic group could vanish in all positive degrees. A very partial result in this direction is the following, which is deduced from Corollary~\ref{cor:SL} using a method from~\cite{MonodJAMS}.

\begin{cor}\label{cor:H3}
Let $k$ be a non-Archimedean local field and $d\in \NN$.

Then $\hbc^n(\GL_d(k))$ vanishes for $0<n\leq 3$.
\end{cor}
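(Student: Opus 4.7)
The plan is to apply the reduction technique of~\cite{MonodJAMS}, which in low degrees controls the continuous bounded cohomology of a reductive group over a local field by the bounded cohomology of its rank-one Levi subfactors. Since $\GL_d(k)$ contains a Levi subgroup of type $\GL_2(k)$—for instance the one arising from a direct-sum decomposition $k^d=k^2\oplus k^{d-2}$—and since by Corollary~\ref{cor:SL} we know $\hbc^n(\GL_2(k))=0$ for all $n>0$, this rank-one input should suffice to force the vanishing of $\hbc^n(\GL_d(k))$ for $1\leq n\leq 3$.

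Concretely, writing $G=\GL_d(k)$ and fixing a minimal parabolic $P\subset G$, the amenability of $P$ lets one compute $\hbc^\bullet(G)$ as the cohomology of the complex of bounded alternating Borel cochains on $(G/P)^{\bullet+1}$. The Bruhat decomposition identifies the $G$-orbits on products of flags with Weyl-group data, and the stabilisers of generic tuples are tori or smaller Levi subgroups. Following the dévissage of~\cite{MonodJAMS}, one analyses a cocycle in degree $n\leq 3$ by restricting to such stabilisers and transferring the resulting class to a $\GL_2(k)$-block, at which point Corollary~\ref{cor:SL} forces the restricted class to be a coboundary; a standard chase then upgrades this to vanishing of the original class in $\hbc^n(G)$.

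The main obstacle is the case $n=3$. In degrees one and two the statement is comparatively soft: degree one vanishes for all groups, and degree two reduces via a double-ergodicity argument along the lines of~\cite{Burger-Monod1}. In degree three, however, the stabiliser of a generic triple of flags is typically too small to contain an honest $\GL_2$-factor, so one must invoke the finer relative/transfer machinery of~\cite{MonodJAMS} in order to inject the rank-one input at the level of $\hbc^3$. The range $n\leq 3$ in the statement is precisely the range in which this dévissage operates with only rank-one data; relaxing it would require new vanishing input for Levi subgroups of rank greater than one, which the present paper does not supply.
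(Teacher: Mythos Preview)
Your overall strategy is the paper's: feed the $\GL_2$ vanishing from Corollary~\ref{cor:SL} into the reduction results of~\cite{MonodJAMS}. But what you have written is a narrative about the internal mechanism of~\cite{MonodJAMS} (orbit structure on flag tuples, stabilisers, an unspecified ``standard chase'') rather than a proof, and that narrative as stated does not establish anything.

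The paper's argument is shorter and more precise. After disposing of $n=1$ (trivial) and $n=2$ (already known from~\cite[6.1]{Burger-Monod1}, not via a fresh double-ergodicity argument), it handles $n=3$ by induction on $d$ using the restriction map
\[
\hbc^3(\GL_d(k)) \longrightarrow \hbc^3(\GL_{d-1}(k)).
\]
The two black-box inputs from~\cite{MonodJAMS} are: Proposition~3.4, which asserts that this map is injective for $d=3$; and Theorem~1.1, which asserts that it is an isomorphism for $d\geq 4$. Combined with the base case $d=2$ from Corollary~\ref{cor:SL}, the induction is immediate.

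Your second and third paragraphs instead try to paraphrase how~\cite{MonodJAMS} might prove such statements, via stabilisers of generic flag tuples and a transfer to a $\GL_2$-block. That is not what one cites here, and as you yourself note, the stabiliser of a generic triple of complete flags is only a torus, so there is no direct $\GL_2$-block to transfer to at that level; invoking unspecified ``finer relative/transfer machinery'' is not a proof step. The relevant reduction operates at the level of cohomology via restriction to the block-diagonal $\GL_{d-1}$ subgroup, not via pointwise stabilisers on $(G/P)^{n+1}$. Replace the sketch with the two citations above and the argument is complete.
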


The case $n\leq 2$ was established in~\cite[6.1]{Burger-Monod1}.

\section{The complex of aligned chains}
Given a (non-emtpy) set $X$ and $n\in\NN$, we denote by $\sC n(X, \ZZ)$ the group of $\ZZ$-valued \textbf{alternating $n$-chains} on $X$, which is the quotient of the free $\ZZ$-module on $X^{n+1}$ by the equivalence relation identifying a $(n+1)$-tuple $(x_0, \dots, x_n)$ with $\mathrm{sgn}(\sigma) (x_{\sigma(0)},\dots,x_{\sigma(n)})$, where $\mathrm{sgn}(\sigma)$ is the signature of an arbitrary permutation $\sigma$ of $\{0, \dots, n\}$. We shall still denote simply by $(x_0, \dots, x_n)$ the equivalence class of the basis element determined by this $(n+1)$-tuple. We obtain the standard resolution
\begin{equation}\label{eq:stdres}
0 \longleftarrow \ZZ \longleftarrow \sC 0(X, \ZZ)  \longleftarrow \sC 1(X, \ZZ)  \longleftarrow \sC 2(X, \ZZ)  \longleftarrow \cdots
\end{equation}
with boundary map $\del\colon \sC n(X, \ZZ) \to \sC {n-1}(X, \ZZ)$ determined by $\del = \sum_{j=0}^n (-1)^j \del_j$, where $\del_j$ omits the $j$-th variable, and augmentation  $\sC 0(X, \ZZ) \to \ZZ$ given by the summation. This is all well-defined under the identification given by the ``alternation'' skew-symmetry.

It is elementary and well-known that~\eqref{eq:stdres} is indeed an exact sequence. All this can be done without change for the vector spaces $\sC *(X, \RR)$ of $\RR$-valued chains.

\bigskip

Let now $T$ be a simplicial tree; we abusively also denote by $T$ the set of its vertices. We introduce the subcomplex of alternating \textbf{aligned chains} $\sA *(T, \ZZ)$ in $\sC *(T, \ZZ)$ by defining $\sA n(T, \ZZ)$ to be spanned by those $(n+1)$-tuples that are contained in some geodesic segment in $T$. Thus, unless $T$ is a linear tree, $\sA n(T, \ZZ)$ is a proper subgroup of  $\sC n(T, \ZZ)$ as soon as $n\geq 2$. Again, the same definitions are introduced over $\RR$.

In order to use these chain complexes for bounded cohomology, we endow $\sC n(T, \RR)$ and $\sA n(T, \RR)$ with the quotient norm induced by the $\ell^1$-norm on $\ell^1(T^{n+1})$. The boundary $\del$ is a bounded operator for these norms. Thus, taking the dual of the normed spaces, we obtain cochain complexes
$$0 \longrightarrow \RR \longrightarrow \ellalt(T)  \longrightarrow \ellalt(T^2)  \longrightarrow \ellalt(T^3)  \longrightarrow \cdots$$
$$0 \longrightarrow \RR \longrightarrow \ellal(T)  \longrightarrow \ellal(T^2)  \longrightarrow \ellal(T^3)  \longrightarrow \cdots$$
wherein $\ellalt(T^{n+1})$ denotes the Banach space of bounded alternating functions on $T^{n+1}$ and $\ellal(T^{n+1})$ the subspace of those that are supported on aligned tuples. More generally, for any Banach space $V$ we can identify the Banach space $\ellal(T^{n+1}, V)$ of $V$-valued bounded functions on aligned tuples with the space of bounded operators from $\sA n(T, \RR)$ to $V$.

\medskip
It can be shown that the complex $\sA *(T, \ZZ)$ is a direct summand of  $\sC *(T, \ZZ)$. The point of the following result is that there exists such a splitting that is moreover realized by bounded operators.

\begin{thm}\label{thm:fhi}
There exists a natural chain map $\fhi_*\colon \sC *(T, \RR)\to \sA *(T, \RR)$ such that each $\fhi_n$ is a bounded projection onto  $\sA n(T, \RR)$. Moreover, $\fhi_n$ sends $\sC n(T, \ZZ)$ to $\sA n(T, \ZZ)$.  
\end{thm}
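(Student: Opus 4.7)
The plan is to build $\fhi_*$ by induction on $n$, with the median operation $\mu\colon T^3\to T$ as the key $\Aut(T)$-equivariant ingredient. For $n=0,1$, every $(n+1)$-tuple trivially lies on a geodesic, so $\sC n(T,\RR) = \sA n(T,\RR)$ and I take $\fhi_n = \Id$.

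The decisive case is $n=2$. Given a tuple $(x_0,x_1,x_2)$ with median $\mu = \mu(x_0,x_1,x_2)\in T$, I would set
\[
\fhi_2(x_0,x_1,x_2) := (\mu, x_1, x_2) + (x_0, \mu, x_2) + (x_0, x_1, \mu).
\]
Each of the three summands is aligned because $\mu$ lies on the geodesic $[x_i,x_j]$ for every pair $i\neq j$. The formula is visibly $\Aut(T)$-equivariant (as $\mu$ is), integral, and bounded with operator norm at most $3$. The chain-map identity $\del\fhi_2 = \del$ follows from a direct calculation in which cross terms of the form $(\mu,x_i)+(x_i,\mu)$ cancel by alternation. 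When the input is already aligned, $\mu$ coincides with the middle vertex; two of the three summands then vanish by alternation and the third reduces to $(x_0,x_1,x_2)$, establishing the projection property.

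For $n\ge 3$, the construction proceeds inductively. Given $\fhi_k$ for all $k<n$ with the stated properties, the element $\fhi_{n-1}\bigl(\del(x_0,\ldots,x_n)\bigr)$ is automatically a cycle in $\sA {n-1}(T,\RR)$, since $\del\fhi_{n-1}\del = \fhi_{n-2}\del\del = 0$. I would define $\fhi_n(x_0,\ldots,x_n)$ as an $\Aut(T)$-equivariantly chosen, bounded, integral $\del$-preimage of this cycle in $\sA n(T,\RR)$. This reduces the proof to two facts about the aligned complex: its acyclicity over $\RR$, which follows from a contracting homotopy exploiting the tree structure; and the existence of a uniformly bounded $\Aut(T)$-equivariant integral section of $\del$ on the cycle subspace.

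The main obstacle is this second point for $n \geq 3$. A cone over a fixed basepoint does not land in $\sA *$, so the section must be built from a moving basepoint chosen equivariantly from the input, for instance via medians with respect to the convex hull of $(x_0,\ldots,x_n)$. The expected uniform bound on the number of aligned summands comes from the combinatorial observation that the convex hull of $n+1$ vertices in $T$ has at most $n+1$ leaves and $n-1$ internal branch points, yielding an $n$-dependent but input-independent control over the decomposition.
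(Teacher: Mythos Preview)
Your treatment of $n\le 2$ is correct, and in fact your $\fhi_2$ \emph{is} the paper's map: the median $\mu(x_0,x_1,x_2)$ is precisely the nearest-point projection of $x_k$ onto the segment $[x_i,x_j]$ for $\{i,j,k\}=\{0,1,2\}$, so your three summands are exactly the three terms $p^x_{i,j}(x)=(p^x_{i,j}(x_0),p^x_{i,j}(x_1),p^x_{i,j}(x_2))$ appearing in the paper's formula.

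The genuine gap is everything from $n\ge 3$ onward. You state accurately what is required --- an $\Aut(T)$-equivariant, uniformly bounded, integral choice of $\del$-preimage in $\sA n$ for the cycle $\fhi_{n-1}(\del x)$ --- but you do not construct it; the last two paragraphs offer only heuristics about leaves and branch points of the convex hull and explicitly label this ``the main obstacle''. That construction is the whole content of the theorem in higher degree, so the proposal as written is a plan rather than a proof. There is also a secondary point you do not address: choosing \emph{some} preimage of $\fhi_{n-1}(\del x)$ does not by itself force $\fhi_n(x)=x$ when $x$ is already aligned, since any aligned cycle could be added; the projection property has to be engineered into the choice.

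The paper avoids the inductive bottleneck by writing down a closed formula for all $n$ at once, namely
\[
\fhi_n(x)\;=\;\sum_{0\le i\le j\le n} p^x_{i,j}(x),
\]
where $p^x_{i,j}$ is the nearest-point projection onto $[x_i,x_j]$ applied coordinatewise. This is exactly the higher-$n$ extension of your median formula. With this in hand, naturality, integrality, and boundedness (by the number of pairs $(i,j)$) are immediate, and the projection property follows because any term with $x_i$ or $x_j$ not a leaf of the convex hull $T(x)$ acquires a repetition and dies in the alternating complex. All the work then goes into checking $\del\fhi_n=\fhi_{n-1}\del$, which the paper does by putting $x$ into a ``caterpillar'' standard configuration along $[x_0,x_n]$ and observing that at most four terms survive; these four are organised via a bracket $\langle u\,|\,z\,|\,v\rangle$ satisfying simple cocycle and face relations, from which the chain-map identity drops out. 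If you want to complete your approach, the missing equivariant section is precisely this projection formula.
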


The naturality of $\fhi_*$ makes it commute with the automorphisms of $T$, and hence Theorem~\ref{thm:fhi} implies the following since~\eqref{eq:stdres} is exact.

\begin{cor}\label{cor:res}
For any Banach space $V$, we have a natural resolution
$$0 \longrightarrow V \longrightarrow \ellal(T,V)  \longrightarrow \ellal(T^2,V)  \longrightarrow \ellal(T^3,V)  \longrightarrow \cdots$$
of $V$. If $V$ is endowed with an isometric linear representation of the automorphism group $\Aut(T)$ of $T$, then the above is a resolution of $V$ by isometric Banach $\Aut(T)$-modules.\qed
\end{cor}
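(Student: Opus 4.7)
The plan is to derive the exactness of the claimed cochain complex from Theorem~\ref{thm:fhi} and the exactness of~\eqref{eq:stdres} by transferring a bounded contracting homotopy from $\sC *(T,\RR)$ to $\sA *(T,\RR)$ and then dualising with coefficients in $V$.

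First I would fix a basepoint $x_\ast\in T$ and take the classical bounded contracting homotopy $h_n\colon \sC n(T,\RR)\to \sC{n+1}(T,\RR)$ sending $(x_0,\dots,x_n)$ to $(x_\ast,x_0,\dots,x_n)$. It has norm at most one and satisfies $\del h_n+h_{n-1}\del=\Id$. Composing with the projection of Theorem~\ref{thm:fhi}, I set $h'_n := \fhi_{n+1}\circ h_n$, viewed as a map $\sA n(T,\RR)\to \sA{n+1}(T,\RR)$. Since $\fhi_*$ is a chain map and $\fhi_n$ restricts to the identity on $\sA n(T,\RR)$, a one-line computation yields, for $\alpha\in\sA n(T,\RR)$,
\[
\del h'_n(\alpha)+h'_{n-1}\del(\alpha) \;=\; \fhi_n\bigl(\del h_n(\alpha)+h_{n-1}\del(\alpha)\bigr) \;=\; \fhi_n(\alpha) \;=\; \alpha.
\]
Boundedness of $\fhi_{n+1}$ and $h_n$ makes $h'_n$ bounded, so $\sA *(T,\RR)\to\RR$ is a bounded resolution of $\RR$ with a bounded contracting homotopy.

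Next, I would dualise. The paragraph preceding Theorem~\ref{thm:fhi} identifies $\ellal(T^{n+1},V)$ with the Banach space of bounded operators $\sA n(T,\RR)\to V$; boundaries and augmentation dualise to bounded coboundaries $d^n$ and a bounded inclusion $V\hookrightarrow\ellal(T,V)$ as constant functions. The dual of $h'$ is a bounded contracting cohomotopy, forcing exactness: whenever $d^n f=0$ one reads off $f=d^{n-1}\bigl((h'_{n-1})^* f\bigr)$. Exactness at $V$ uses $\sA 1=\sC 1$, which forces any $0$-cocycle $T\to V$ to be constant. For the equivariance statement, each $\ellal(T^{n+1},V)$ is visibly an isometric $\Aut(T)$-module, and the coboundaries and augmentation are $\Aut(T)$-equivariant by naturality of $\del$ and of the summation; the homotopy $h'$ depends on the chosen basepoint and is not equivariant, but the corollary only asserts exactness and isometricity, which do not require an equivariant cohomotopy.

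The only delicate point is verifying that composing the inhomogeneous basepoint homotopy on $\sC *(T,\RR)$ with $\fhi_*$ again produces a contracting homotopy on $\sA *(T,\RR)$; this is precisely where both the chain-map property of $\fhi_*$ and its being a projection onto $\sA *(T,\RR)$ enter, and it is the only use of Theorem~\ref{thm:fhi}. Everything thereafter is routine dualisation.
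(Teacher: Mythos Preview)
Your argument is correct and is exactly the unpacking the paper intends: the corollary carries a \qed\ and is justified only by the sentence preceding it, namely that Theorem~\ref{thm:fhi} together with the exactness of~\eqref{eq:stdres} gives the result by naturality. Your transfer of the cone homotopy through the bounded chain projection $\fhi_*$, followed by dualisation with $V$-coefficients, is the standard way to make that sentence precise, and your verification $\del h'_n+h'_{n-1}\del=\fhi_n=\Id$ on $\sA n$ uses precisely the two properties (chain map, projection) that Theorem~\ref{thm:fhi} supplies.
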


\begin{proof}[Proof of Theorem~\ref{thm:fhi}]
Given a $(n+1)$-tuple $x=(x_0, \dots, x_n)$ and $1\leq i,j\leq n$, we denote by $p_{i,j}^x\colon T\to T$ the map that sends a vertex to its nearest point projection on the geodesic segment $[x_i, x_j]$. This depends of course on the ordering of the tuple; however, summing over all pairs $i\leq j$, we obtain a well-defined map $\fhi_n\colon \sC n(T, \ZZ) \to \sA n(T, \ZZ)$ determined on tuples by
\begin{equation}\label{eq:fhi}
\fhi_n (x) = \sum_{i\leq j} p_{i,j}^x(x), \kern5mm\text{where } p_{i,j}^x(x) = (p_{i,j}^x(x_0), \dots, p_{i,j}^x(x_n)).
\end{equation}
Moreover, $\fhi_n$ yields a bounded operator $\sC n(T, \RR)\to \sA n(T, \RR)$.

In order to study the sum~\eqref{eq:fhi} defining $\fhi_n (x)$, we consider the finite tree $T(x)$ obtained as the convex hull in $T$ of the coordinates of $x$. In particular, all leaves of $T(x)$ are coordinates of $x$. We first observe that $p_{i,j}^x(x)$ vanishes in $\sA n(T, \ZZ)$ as soon as either $x_i$ or $x_j$ is not a leaf of $T(x)$, since in that case $p_{i,j}^x(x)$ contains repetitions. In particular, if $x$ is already aligned, then the only possibly non-zero term is given by the indices $i\leq j$ such that $\{x_i, x_j\}$ are the extremal points of the interval $T(X)$. This already shows that $\fhi_n$ is a projection onto $\sA n(T, \ZZ)$.

\medskip
The main point is to show that $\fhi_*$ is a chain map, i.e.\ that $\del \fhi_n = \fhi_{n-1}\del$ holds. Since $\fhi_*$ is the identity on $\sC 0$ and $\sC 1$, we assume $n\geq 2$. Choose some $x\in X^{n+1}$.

Most terms in the sum~\eqref{eq:fhi} vanish; more precisely, we observe that any pair $i\leq j$ with $p_{i,j}^x(x) \neq 0$ in $\sA n(T, \ZZ)$ has the following property: $x_i$ and $x_j$ are (distinct) leaves of $T(x)$ and all $\bar x_k:= p_{i,j}^x(x_k)$ are distinct ($k=0, \dots, n$). Indeed, in all other cases $p_{i,j}^x(x)$ would contain repetitions. In other words, upon renumbering the indices so that the $\bar x_k$ are linearly ordered on $[x_i, x_j]$, the tree $T(x)$ has the following configuration
\begin{center}
\setlength{\unitlength}{0.7cm}
\thicklines
\begin{picture}(16,4)
\put(2,3){\line(1,0){6}}
\put(10,3){\line(1,0){4}}
\put(2,3){\circle*{0.2}}\put(0.5,3.3){$x_0=\bar x_0$}
\put(4,3){\circle*{0.2}}\put(3.9,3.3){$\bar x_1$}
\put(4,1){\circle*{0.2}}\put(3.9,0.5){$x_1$}
\put(4,3) {\line(0,-1){2}}
\put(6,3){\circle*{0.2}}\put(5.9,3.3){$\bar x_2$}
\put(6,1){\circle*{0.2}}\put(5.9,0.5){$x_2$}
\put(6,3) {\line(0,-1){2}}
\put(8.6,3){$\ldots$}
\put(12,3){\circle*{0.2}}\put(11.9,3.3){$\bar x_{n-1}$}
\put(12,1){\circle*{0.2}}\put(11.9,0.5){$x_{n-1}$}
\put(12,3) {\line(0,-1){2}}
\put(14,3){\circle*{0.2}}\put(13.9,3.3){$x_n=\bar x_n$}
\end{picture}
\end{center}
where all horizontal segments have non-zero length; let us call this a \emph{standard configuration} for now. In conclusion, there are at most four non-zero terms in the sum~\eqref{eq:fhi}, corresponding to $i=0,1$ and $j=n-1, n$ once $x$ is in a standard configuration.

At this point we introduce a notation for arbitrary tuples which does not use the tree structure of $T$: given a $(n-1)$-tuple $z=(z_0, \dots z_{n-2})$ and two pairs $u=(u', u'')$ and $v=(v', v'')$, we define a sum of $(n+1)$-tuples, and hence also an element of $\sC n(T, \ZZ)$, by the formula
$$\pate uzv := - (u', z, v') + (u', z, v'') - (u'', z, v'') + (u'', z, v').$$
Notice that the value of $\pate uzv$ in $\sC n(T, \ZZ)$ depends only on $u,v$ as elements of $\sC 1(T, \ZZ)$ and on $z$ as an element of $\sC {n-2}(T, \ZZ)$. We have the cocycle relation
\begin{equation}\label{eq:coqpate}
\pate{(u',u'')}zv =  \pate{(u',u''')}zv + \pate{(u''',u'')}zv
\end{equation}
and the corresponding relation with $u$ and $v$ interchanged. Furthermore, we have
\begin{equation}\label{eq:delpate}
\del_j \pate uzv  = \begin{cases}
0 & \text{if $j= 0,n$,}\\
\pate{u}{\del_{j-1} z}{v} & \text{otherwise.}
\end{cases}
\end{equation}

\smallskip
We now return to the four possibly non-vanishing terms of $\fhi_n(x)$ for a standard configuration of $x$. Using skew-symmetry, we can rewrite them as
\begin{equation}\label{eq:fhipate}
\fhi_n(x) =  \pate {(x_0, x_1)}{(\bar x_1, \dots, \bar x_{n-1})}{(x_{n-1}, x_n)}.
\end{equation}
This rewriting uses the fact that the projection of $x_0$ onto $[x_1, x_n]$ and onto $[x_1, x_{n-1}]$ is $\bar x_1$, and similarly that the two relevant projections of $x_n$ are $\bar x_{n-1}$. Notice that this equation does indeed also hold in the case $n=2$, even though only three terms are possibly non-zero.

In order to conclude our proof of $\del \fhi_n = \fhi_{n-1}\del$, we first assume $n\geq 3$. Notice that for any $j$ the $(n-1)$-tuple $\del_j x$ still is in a standard configuration. Therefore, if $2\leq j \leq n-2$, then~\eqref{eq:delpate} and~\eqref{eq:fhipate} imply $\del_j \fhi_n (x) = \fhi_{n-1}\del_j (x)$. Therefore, since $\del_0 \fhi_n (x)$ and $\del_n \fhi_n (x)$ vanish, it suffices to verify that  $-\del_1 \fhi_n (x) = \fhi_{n-1}\del_0 (x) - \fhi_{n-1}\del_1 (x)$ and that $\del_{n-1} \fhi_n (x) = \fhi_{n-1}\del_{n-1} (x) - \fhi_1\del_n (x)$. We check the first since the other follows by symmetry. The relation~\eqref{eq:coqpate} implies with~\eqref{eq:fhipate}
$$\fhi_{n-1}\del_0 (x) - \fhi_{n-1}\del_1 (x) = - \pate{(x_0, x_1)}{(\bar x_2, \dots, \bar x_{n-1})}{(x_{n-1}, x_n)}$$
which is indeed $-\del_1 \fhi_n (x)$ by~\eqref{eq:delpate} and~\eqref{eq:fhipate}.

\smallskip
Finally, we examine the special case $n=2$; thus we consider $x=(x_0, x_1, x_2)$ in a standard configuration. Then
\begin{align*}
\fhi_2(x) &= (\bar x_1, x_1, x_2) + (x_0, \bar x_1, x_2) + (x_0, x_1, \bar x_1)\\
&= x + \del (x_0, x_1, x_2, \bar x_1)
\end{align*}
(by skew-symmetry), and therefore $\del\fhi_2(x) = \del x$, which is $\fhi_1 \del x$ as required.
\end{proof}

\section{Proof of Theorem~\ref{thm:main}}
The image of $G$ in $\Aut(T)$ is closed since the action is proper. We can assume that it is non-compact and that $T$ is a thick tree, since otherwise $G$ would be amenable which implies the vanishing of its bounded cohomology. Thus, the transitivity on the boundary $\partial T$ implies that $G$ is actually $2$-transitive on $\partial T$, see e.g. Lemma~3.1.1 in~\cite{Burger-Mozes1}. Thus $G$ is transitive on the set of geodesic lines in $T$, and in fact even \emph{strongly transitive} in the sense that it acts transitively on the set of pairs consisting of a geodesic line and a geometric (i.e.\ unoriented) edge in that line, see e.g. Corollary~3.6 in~\cite{Caprace-Ciobotaru}.

Recall that there is a homomorphism $G\to\ZZ/2\ZZ$ given by the parity of the displacement length of some, or equivalently any, vertex of $T$; in particular, the kernel $G^+< G$ of this homomorphism acts without inversions. It suffices to prove the theorem for $G^+$ instead of $G$ since the restriction from $G$ to $G^+$ in bounded cohomology is injective~\cite[8.8.5]{Monod}. On the other hand, the criterion~(2) of Lemma~3.1.1 in~\cite{Burger-Mozes1} shows that $G^+$ still is $2$-transitive on $\partial T$ because any point-stabiliser in $G$ is contained in $G^+$. We can therefore assume $G=G^+$.

Since the $G$-action on the various sets of aligned tuples is proper, the $G$-module $\ellal(T^{n+1}, \RR)$ is relatively injective in the sense of bounded cohomology for all $n\geq 0$, see e.g.~\cite[4.5.2]{Monod}. Therefore, Corollary~\ref{cor:res} implies that $\hbc^*(G)$ is realized by the complex of $G$-invariants
$$0 \longrightarrow \ellal(T,\RR)^G  \longrightarrow \ellal(T^2,\RR)^G  \longrightarrow \ellal(T^3,\RR)^G  \longrightarrow \cdots$$
Choose a geodesic line $L$ in $T$ and denote by $H<G$ the (setwise) stabiliser of $L$ in $G$; notice that $H$ is amenable. In particular, it is trivial for bounded cohomology. Therefore, the following proposition concludes the proof of Theorem~\ref{thm:main} (noting that the restriction is a cochain map).

\begin{prop}\label{prop:res}
The restriction to $L$ determines an isomorphism
$$\ellal(T^{n+1},\RR)^G \xrightarrow{\ \cong\ }  \ellalt(L^{n+1},\RR)^H$$
for all $n\geq 0$.
\end{prop}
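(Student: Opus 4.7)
The restriction map is manifestly well-defined: a tuple in $L^{n+1}$ is automatically aligned, and $H$-invariance of the restriction is inherited from $G$-invariance since $H<G$. The plan rests on the following geometric observation, for which the thickness of $T$ and the transitivity of $G$ on geodesic lines (both established in the opening paragraph of the proof of Theorem~\ref{thm:main}) are the crucial inputs: every aligned tuple $x=(x_0,\dots,x_n)$ with at least two distinct entries is contained in a \emph{unique} bi-infinite geodesic line $L_x\subset T$, namely the extension of the segment spanned by its coordinates; and $G$ acts transitively on the set of all such lines. Combined with the fact that alternating functions vanish on tuples with repeated entries (when $n\geq 1$), this lets us reduce every aligned tuple to one lying in $L^{n+1}$. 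The degenerate case $n=0$ is handled separately: both sides equal $\RR$, since $G$ acts transitively on the vertices of $T$ (via transitivity on lines) and $H$ acts transitively on $L$ (from strong transitivity together with $G=G^+$).

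For injectivity, I would take $f\in\ellal(T^{n+1},\RR)^G$ vanishing on $L^{n+1}$. If $x$ has a repeated coordinate then $f(x)=0$ by alternation; otherwise, pick $g\in G$ with $gL_x=L$, so that $gx\in L^{n+1}$ and $f(x)=f(gx)=0$.

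For surjectivity, given $h\in\ellalt(L^{n+1},\RR)^H$, I would extend it by setting $\tilde h(x):=h(gx)$ for any $g\in G$ with $gL_x=L$ (and $\tilde h(x):=0$ whenever $x$ has a repeated coordinate). The only substantive step is well-definedness: two valid choices $g_1,g_2$ both send the unique line $L_x$ to $L$, so $g_2 g_1^{-1}$ preserves $L$ and thus lies in $H$, whence $H$-invariance of $h$ gives $h(g_1x)=h(g_2x)$. The $G$-equivariance of $\tilde h$ is then built into the construction, the bound $\|\tilde h\|_\infty\leq\|h\|_\infty$ is immediate, and $\tilde h|_{L^{n+1}}=h$ follows by choosing $g=\Id$. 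The only obstacle worth flagging is precisely this well-definedness, and it rests entirely on the uniqueness of $L_x$—which is the reason thickness of $T$ enters at this stage.
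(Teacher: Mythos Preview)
Your argument has a genuine gap: the claim that an aligned tuple $x$ with at least two distinct entries lies in a \emph{unique} bi-infinite geodesic $L_x$ is false. In a thick tree, the finite segment spanned by the coordinates of $x$ extends to infinitely many bi-infinite lines, since at each endpoint there are at least two outgoing directions. Thickness guarantees the existence (indeed, abundance) of such extensions, not their uniqueness; the latter would hold only if $T$ were itself a single line. This breaks well-definedness in your surjectivity argument: if $g_1, g_2 \in G$ both carry $x$ into $L$, they may do so via different lines through $x$, so $g_2 g_1^{-1}$ need not stabilise $L$ and hence need not lie in $H$. If instead you fix one line $L_x$ per tuple and restrict to those $g$ with $g L_x = L$, well-definedness is restored but $G$-invariance of $\tilde h$ is lost, since the assignment $x\mapsto L_x$ cannot be made $G$-equivariant without uniqueness. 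Your injectivity argument, which needs only the existence of some line through $x$, survives.

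The paper avoids the problem by proving directly that for any $g, g'$ with $gx, g'x \subset L$ there exists $h\in H$ with $hgx = g'x$: strong transitivity supplies $h\in H$ matching the geometric edge $\{gx_0, gy\}$ to $\{g'x_0, g'y\}$ (where $y$ is the neighbour of $x_0$ on $[x_0,x_n]$), and the reduction $G=G^+$ rules out the orientation swap $hgx_0 = g'y$. Incidentally, your treatment of $n=0$ also needs adjustment: after passing to $G=G^+$ the bipartition of $T$ is preserved, so neither $G$ on $T$ nor $H$ on $L$ is vertex-transitive; both invariant spaces are two-dimensional rather than $\RR$, though the restriction is still an isomorphism.
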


\begin{proof}[Proof of Proposition~\ref{prop:res}]
The map is injective because every aligned tuple lies on some geodesic line and $G$ can send this line to $L$ since it is transitive on lines.

For surjectivity, we fix a function $f$ in $\ellalt(L^{n+1},\RR)^H$ and proceed to extend it to a function $\widetilde f$ in $\ellal(T^{n+1},\RR)^G$ as follows. Choose an aligned tuple $x=(x_0, \dots, x_n)$ in $T$; we can assume that it lies within the segment $[x_0, x_n]$. By transitivity on lines, there is $g\in G$ such that $gx$ lies on $L$. We claim that if $g'$ is any other such element, then the values $f(gx)$ and $f(g'x)$ coincide. This claim allows us to define $\widetilde f(x) = f(gx)$ in a well-posed and $G$-invariant manner.

To prove the claim, it suffices to find $h\in H$ such that $h gx = g'x$. Since $x$ lies in $[x_0, x_n]$, it is enough to ensure $hg x_0 = g' x_0$ and $hg x_n = g' x_n$. Let $y$ be the first vertex after $x_0$ in the segment $[x_0, x_n]$ (noting that the case $x_0= x_n$ would be trivial). The strong transitivity of $G$ on $T$ implies that $H$ is transitive on the geometric edges of $L$. Therefore, there is $h\in H$ with $h\{g x_0, g y\} = \{g' x_0, g' y\}$. If we now establish $h g x_0 = g' x_0$, then $hg x_n = g' x_n$ follows and the claim holds. It remains thus to exclude $h g x_0 = g' y$. The latter inequality is impossible because $x_0$ is adjacent to $y$ and we reduced to the case $G=G^+$.
\end{proof}

\subsection*{Scholium: the flatmate complex}
More generally, let $X$ be a building. We still abusively denote by $X$ the set of its vertices. We define the subcomplex of alternating \textbf{flatmate chains} $\sF *(X, \ZZ)$ in $\sC *(X, \ZZ)$ by defining $\sF n(X, \ZZ)$ to be spanned by those $(n+1)$-tuples that are contained in some apartment. We define the corresponding complex $\sF *(X, \RR)$ of normed vector spaces.

\smallskip
\itshape
We conjecture that $\sF *(X, \RR)$ admits a bounded contracting homotopy when $X$ is an affine building.\upshape

\medskip

If this is the case, then the continuous bounded cohomology of $\Aut(X)$ with coefficient in a Banach module $V$ is realised on the complex
$$0 \longrightarrow \ellF(X,V)^{\Aut(X)}  \longrightarrow \ellF(X^2,V)^{\Aut(X)}  \longrightarrow \ellF(X^3,V)^{\Aut(X)}  \longrightarrow \cdots$$
where $\ellF$ denotes the bounded alternating function supported on tuples that lie in some apartments. Indeed, the conjecture provides a resolution as in Corollary~\ref{cor:res} and this resolution is relatively injective because the action of $\Aut(X)$ on any tuples is proper.

At this point, our proof of Theorem~\ref{thm:main} can be translated faithfully to the present context provided $\Aut(X)$ is \emph{strongly transitive}, i.e.\ acts transitively on the set of apartments with a distinguiched chamber. In particular, this applies to Bruhat--Tits buildings.

In conclusion, the above conjecture implies the vanishing of the bounded cohomology of any semi-simple group over a non-Archimedean local field $k$. This, in turn generalizes to algebraic groups over $k$ since we do not change the bounded cohomology when quotienting out the amenable radical.

\section{Remaining proofs}
We start by recalling the particular case of products in the Hochschild--Serre spectral sequence for the continuous bounded cohomology, following~\cite[\S12]{Monod}. Let $G=G_1\times G_2$ be a product of locally compact groups.

Although there is indeed a natural spectral sequence $E_r^{p,q}$ abbuting to $\hbc^*(G)$ (viewed as abstract vector space), there is in general no usable description of the entire second tableau $E_2^{p,q}$; this difficulty occurs to some extent also for (usual) continuous cohomology. Nonetheless, if for some $q$ the semi-normed space $\hbc^q(G_2)$ is Hausdorff, then $E_2^{p,q}$ is isomorphic to $\hbc^p(G_1, \hbc^q(G_2))$ for this $q$ and all $p$. See~12.2.2(ii) in~\cite{Monod}.

\begin{proof}[Proof of Corollary~\ref{cor:lattices}]
According to Corollary~1.12 in~\cite{MonodVT}, there is an isomorphism
$$\hbc^n(G) \longrightarrow \hb^n(\Gamma)$$
for all $n<2\ell$, where $G=G_1\times \cdots \times G_\ell$. On the other hand, the above form of the Hochschild--Serre spectral sequence can be applied by induction on $\ell$ since $\hbc^q(G_i)$ is Hausdorff for all $q$ and all $i$ thanks to Theorem~\ref{thm:main}. (The degree zero bounded cohomology is always one-dimensional and Hausdorff.) We deduce that $\hbc^n(G)$ vanishes for all $n>0$.
\end{proof}

\begin{proof}[Proof of Corollary~\ref{cor:sadic}]
The group $\SL_2(\ZZ[S^{-1}])$ is an irreducible lattice in the product
$$\SL_2(\RR) \times \prod_{p\in S}  \SL_2(\QQ_p).$$
Again, we can apply  Hochschild--Serre since the bounded cohomology of $\prod_{p\in S}  \SL_2(\QQ_p)$ is Hausdorff and we conclude as in Corollary~\ref{cor:lattices} by applying Corollary~1.12 in~\cite{MonodVT} with now $\ell = 1+ |S|$.
\end{proof}

\begin{proof}[Proof of Corollary~\ref{cor:H3}]
Writing $G_d=\GL_d(k)$, we shall prove by induction on $d$ that $\hbc^3(G_d)$ vanishes, recalling that $\hbc^2(G_d)$ vanishes by~\cite[6.1]{Burger-Monod1} and that $\hbc^1$ always vanishes. Since the case $d<2$ is trivial, the starting point is $d=2$, which is taken care of by Corollary~\ref{cor:SL}. For $d=3$, we apply Proposition~3.4 in~\cite{MonodJAMS}, which states that the restriction map
$$\hbc^3(G_3) \longrightarrow \hbc^3(G_2)$$
is injective. For $d\geq 4$, the restriction $\hbc^3(G_d) \to \hbc^3(G_{d-1})$ is an isomorphism by Theorem~1.1 in~\cite{MonodJAMS}.
\end{proof}

\medskip\noindent
\textbf{Acknowledgements.} It is a pleasure to thank Pierre-Emmanuel Caprace for his remarks on a preliminary version of this article.


\bibliographystyle{../BIB/amsplain}
\bibliography{../BIB/ma_bib}

\end{document}